\documentclass[paper=a4,english,fontsize=11pt,parskip=half,abstract=true]{scrartcl}
\usepackage{babel}
\usepackage[utf8]{inputenc}
\usepackage[T1]{fontenc}
\usepackage[left=20mm,right=20mm,top=30mm,bottom=30mm]{geometry}
\usepackage{amsmath}
\usepackage{amsthm}
\usepackage{amssymb}
\usepackage{thmtools}
\usepackage{mathtools}
\mathtoolsset{centercolon} 
\usepackage[bookmarks=true,
            pdftitle={On the complement of a union of cosets},
            pdfauthor={Benjamin Sambale},
            pdfkeywords={union of cosets, coset cover, irredundant cover, Helly property},
            pdfstartview={FitH}]{hyperref}

\newtheorem{Thm}{Theorem} 

\newtheorem{Lem}[Thm]{Lemma}

\theoremstyle{remark}

\theoremstyle{definition}

\numberwithin{equation}{section}

\allowdisplaybreaks[1]

\renewcommand{\phi}{\varphi}

\newcommand{\ZZ}{\mathbb{Z}}

\newcommand{\QQ}{\mathbb{Q}}

\newcommand{\NN}{\mathbb{N}}
\newcommand{\FF}{\mathbb{F}}

\title{On the complement of a union of cosets}
\author{Benjamin Sambale\footnote{Institut für Algebra, Zahlentheorie und Diskrete Mathematik, Leibniz Universität Hannover, Welfengarten 1, 30167 Hannover, Germany,
\href{mailto:sambale@math.uni-hannover.de}{sambale@math.uni-hannover.de}}}
\date{\today}

\begin{document}
\frenchspacing
\maketitle
\renewcommand{\sectionautorefname}{Section}

\begin{abstract}\noindent
Let $g_1H_1,\ldots,g_nH_n$ be left cosets of subgroups of a group $G$ whose union $U$ is a proper subset of $G$. We prove that $G$ is the union of at most $2^n$ left translates of $A:=G\setminus U$. For finite $G$ this yields $|A|\ge|G|/2^n$, which settles a conjecture of T\u{a}rn\u{a}uceanu and the author. Equality $|A|=|G|/2^n$ can only hold when $A$ is a left coset of $K:=H_1\cap\ldots\cap H_n$. Moreover, every subgroup occurring in an irredundant cover of an arbitrary group by $n$ cosets has index at most $2^{n-1}$. This improves a lemma of Neumann. 
\end{abstract}

\textbf{Keywords:} union of cosets, coset cover, irredundant cover, Helly property\\
\textbf{AMS classification:} 20D60, 05E16, 20E15

\section{Introduction}\label{sec:intro}

A finite group $G$ is never the union of two proper subgroups $H_1$ and $H_2$, and an easy count shows more precisely that $H_1\cup H_2$ misses at least a quarter of the elements of $G$. Results of this kind have a long history. Cameron and Cohen~\cite{CameronCohen} showed that at least $|H|$ elements lie outside a union of conjugates of a proper subgroup $H\le G$, and the groups covered by a prescribed number of proper subgroups have been classified in many cases (see the survey~\cite{Bhargava}).

The general question behind these results is the following. Let $C_1,\ldots,C_n$ be cosets of subgroups of a finite group $G$, and suppose that 
\[
	A:=G\setminus(C_1\cup\ldots\cup C_n)
\]
is nonempty. How small can $A$ be? 
In~\cite{SambaleUnion} T\u{a}rn\u{a}uceanu and the author proved the bound $|A|\ge|G|/(2n!)$ and conjectured that
\begin{equation}\label{eq:conj}
	|A|\ge\frac{|G|}{2^n}
\end{equation}
(this is Problem~21.115 in the Kourovka notebook~\cite{Kourovka21}). 
I learned from Stefanos Aivazidis that an AI-generated proof of~\eqref{eq:conj} has been announced on GitHub~\cite{Danny}.
The aim of this paper is to prove the following stronger result for arbitrary (possibly infinite) groups.

\begin{Thm}\label{thm:A}
Let $G$ be a group with (left or right) cosets $C_1,\ldots,C_n$ of subgroups such that $A:=G\setminus(C_1\cup\ldots\cup C_n)\ne\varnothing$. Then there exist $t_1,\ldots,t_m\in G$ with $G=t_1A\cup\ldots\cup t_mA$ and $m\le2^n$.
\end{Thm}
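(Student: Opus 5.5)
The plan is to argue by induction on $n$, proving a slightly stronger and more uniform statement. First I reduce to left cosets. A right coset $Hg$ equals $g(g^{-1}Hg)$, which is a left coset of a conjugate subgroup, so I may assume $C_i=g_iH_i$ for all $i$. Replacing every $C_i$ by $a^{-1}C_i$ for a fixed $a\in A$ replaces $A$ by $a^{-1}A$ and does not change the number of translates needed, so I may also assume $1\in A$, i.e.\ $1\notin C_i$ for all $i$. The case $n=0$ is trivial: $A=G$ and $m=1$.

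For the inductive step, put $A':=G\setminus(C_1\cup\ldots\cup C_{n-1})\supseteq A$. This set is nonempty, so by induction $G=s_1A'\cup\ldots\cup s_kA'$ with $k\le 2^{n-1}$. It therefore suffices to show that $A'$ is covered by two left translates of $A$. Write
\[
A'=A\cup(A'\cap C_n).
\]
The task then reduces to the following Helly-type claim: there is $x\in G$ with $A'\cap C_n\subseteq xA$. Granting this, $G=\bigcup_j s_jA\cup\bigcup_j s_jxA$ with at most $2^n$ terms.

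To attack the claim, write $C_n=gH_n$ and restrict everything to this coset. After left multiplication by $g^{-1}$, the set $g^{-1}(A'\cap C_n)$ is the complement in the group $H_n$ of the union of the sets $g^{-1}C_i\cap H_n$ for $i<n$. Each of these is either empty or a left coset of $H_i\cap H_n$ in $H_n$. So it is again a "complement of at most $n-1$ cosets", now living inside $H_n$. Similarly, $A\cap H_n$ is the complement in $H_n$ of the sets $C_i\cap H_n$ for $i<n$, using that $1\notin C_n$ forces $H_n\cap C_n=\varnothing$. I will look for $x$ of the form $x=hg^{-1}$ with $h\in H_n$, which automatically sends $C_n$ onto $H_n$ and hence off $C_n$. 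The remaining condition is that $hg^{-1}$ moves $A'\cap C_n$ off every $C_i$ with $i<n$. This is a statement purely about two families of at most $n-1$ cosets in $H_n$, and I would try to derive it from a strengthened inductive hypothesis. The strengthening would say that the translating elements can be chosen inside $H_1\cap\ldots\cap H_n$ modulo a controlled coset, or equivalently that the statement holds relative to an ambient coset.

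I expect this Helly-type step to be the main obstacle. The inclusion $A'\cap C_n\subseteq xA$ may fail for a naive choice of $x$, because translating changes the position of $A'\cap C_n$ relative to all the other cosets at once. If it fails outright, I will fall back on covering $A'\cap C_n$ by translates of $A$ whose number is controlled by the induction inside $H_n$. The count would then be $2^{n-1}$ translates for $A'$ plus a matching $2^{n-1}$ for the part in $C_n$, rather than $2\cdot 2^{n-1}$ coming from a factor $2$ per step. The bookkeeping would be arranged so that the total stays at most $2^n$.
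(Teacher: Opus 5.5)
Your reductions are fine, but the argument rests on the claim that $A'\cap C_n\subseteq xA$ for a single $x$ (equivalently $A'\subseteq A\cup xA$). You leave this unproved, and it is false. Take $G=S_3$ and $K=\langle(1\,2)\rangle$, and let $C_1,C_2$ be the two right cosets of $K$ other than $K$ itself. Each is a left coset of a conjugate of $K$, so this fits your set-up, and $1\in A$. Then $A=K$, $A'=K\cup C_2$, and $A'\cap C_2=C_2=Kg$ with $g\notin K$. A left translate $xK$ containing $Kg$ must equal it, since both have two elements. It also contains $g$, so $Kg=gK$, i.e.\ $g\in N_G(K)=K$, a contradiction. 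So $A'$ needs three translates of $A$, not two. Since $C_1$ and $C_2$ play symmetric roles, no reordering of the cosets rescues a ``factor $2$ per coset'' induction. A minor slip as well: you need $x^{-1}$, not $x$, to carry $C_n$ onto $H_n$, so $x$ should have the form $gh$ rather than $hg^{-1}$.

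The fallback in your last paragraph is not an argument. Induction inside $H_n$ covers $H_n$ by translates of a complement set. That is the wrong direction for covering $A'\cap C_n$ by translates of $A$, and nothing explains where a ``matching'' $2^{n-1}$ would come from.

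The missing idea is that the bound comes globally, not coset by coset. The paper normalises to $1\notin U$ and takes an inclusion-minimal finite $T=\{t_1,\dots,t_m\}$ with $\bigcap_j t_jU=\varnothing$. It then bounds $m$ by a Helly-type lemma:
\begin{itemize}
\item choose witnesses $x_j\in\bigcap_{k\ne j}t_kU\setminus t_jU$ and label the $H_i$-cosets injectively by rationals;
\item the polynomials $\prod_i\bigl(X_i-\lambda_i(x_jH_i)\bigr)$ vanish at the point encoding $(t_kg_iH_i)_i$ exactly when $k\ne j$;
\item hence they are linearly independent in the $2^n$-dimensional space of multilinear polynomials, so $m\le 2^n$.
\end{itemize}
Taking complements gives $G=\bigcup_j t_jA$. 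For infinite $G$ one also needs that such a finite $T$ exists at all. The paper proves this separately, by a weight-decreasing refinement of lists of cosets of the intersections $H_S$.
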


For finite $G$ the union bound $|G|\le m|A|\le2^n|A|$ yields \eqref{eq:conj}. It has been pointed out in \cite{SambaleUnion} that \eqref{eq:conj} is best possible for $G=\FF_2^n$ and $C_i:=\{x\in G:x_i=1\}$.  
The constant $2^n$ in \autoref{thm:A} is sharp for infinite groups as well: In $G=\ZZ^n$ let $C_i:=\{x\in G:x_i\ \text{odd}\}$. Then $A=(2\ZZ)^n$ is a subgroup of index $2^n$, so exactly $2^n$ translates of $A$ are needed to cover $G$.

The next result describes the equality case for finite $G$ in general.
We may assume w.l.o.g. that the $C_i$ are left cosets since $Hg=g(g^{-1}Hg)$ for $g\in G$ and $H\le G$. 

\begin{Thm}\label{thm:C}
Let $G$ be a finite group with left cosets $C_i=g_iH_i$ of subgroups $H_i\le G$ such that $A=G\setminus(C_1\cup\ldots\cup C_n)\ne\varnothing$. Let $K:=H_1\cap\ldots\cap H_n$. Then
\[
	|A|=\frac{|G|}{2^n}
	\quad\Longleftrightarrow\quad
	A=aK\ \text{for some }a\in G\ \text{and}\ |G:K|=2^n.
\]
In this case the subgroups $H_1,\ldots,H_n$ are pairwise distinct.
\end{Thm}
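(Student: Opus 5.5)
The direction ``$\Leftarrow$'' is immediate: if $A=aK$ and $|G:K|=2^n$, then $|A|=|K|=|G|/2^n$. For ``$\Rightarrow$'' I would first note that $A$ is a union of left cosets of $K$. Indeed, if $x\notin g_iH_i$ and $k\in K\le H_i$, then $xk\notin g_iH_i$, so $AK=A$. Next I apply \autoref{thm:A} to get $G=t_1A\cup\ldots\cup t_mA$ with $m\le 2^n$. The hypothesis then gives
\[
|G|\le m|A|\le 2^n|A|=|G|,
\]
so $m=2^n$ and the translates $t_1A,\ldots,t_{2^n}A$ are pairwise disjoint. Thus $G$ is \emph{partitioned} by $2^n$ left translates of $A$. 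This rigidity is what I will exploit.

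The main step is an induction on $n$ that fibres over $H_n$. The case $n=0$ is trivial, since then $A=G=K$. For $n\ge1$, each left coset $xH_n\ne C_n$ is disjoint from $C_n$. Moreover $g_iH_i\cap xH_n$ is either empty or a left coset of $H_i\cap H_n$ inside $xH_n$. After translating by $x^{-1}$, $A\cap xH_n$ becomes the complement in $H_n$ of at most $n-1$ cosets of subgroups of $H_n$. By \autoref{thm:A} (in its counting form), each such piece is either empty or has size at least $|H_n|/2^{n-1}$. I then want to show that $A$ meets at most half of the cosets of $H_n$ in a way compatible with the total count. Combined with $|A|=|G|/2^n$, this should force two things: $A$ meets exactly $|G:H_n|/2$ cosets $xH_n$ (in particular $|G:H_n|=2$ is plausible in the extremal situation), and in each of these $A\cap xH_n$ attains the bound for $n-1$ cosets. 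Induction applied inside $xH_n$ then shows that $A\cap xH_n$ is a single coset $x'K_n$, where $K_n=\bigcap_{i<n}(H_i\cap H_n)=K$, and that $|H_n:K|=2^{n-1}$.

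To glue the fibres and conclude that $A$ is a \emph{single} coset of $K$ with $|G:K|=2^n$, I would use the disjoint partition $G=\bigsqcup_j t_jA$ again. Since $A$ is a union of left $K$-cosets, $|A|=r|K|$ for some $r\ge1$, and then $|G:K|=2^n r$. I would aim to prove $r=1$ by symmetry. The same fibre analysis can be done with any index $i$ in place of $n$. I would compare the fibre decompositions over the various $H_i$ to show that $A$ lies in a single coset $aH_i\cap\ldots$ for each direction, hence in one coset of $K$. The final claim is then easy. If, say, $H_1=H_2$, then $C_1$ and $C_2$ are either equal or disjoint cosets of the same subgroup. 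In both cases the union of the $C_i$ is the union of $n$ cosets whose intersection subgroup has index at most $2^{n-1}$ in $G$. Indeed, dropping a repeated subgroup does not change $K$, and then $|G:K|=2^n$ contradicts Neumann-type counting; more concretely, the induction hypothesis applied to the $n-1$ distinct subgroups gives $|G:K|\le 2^{n-1}$.

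The step I expect to be the main obstacle is the counting claim that $A$ cannot meet more than $|G:H_n|/2$ of the cosets of $H_n$ without making $|A|$ too large. Equivalently, I need an inequality of the form $|A|\ge\frac12|A'|$ with $A'=G\setminus(C_1\cup\ldots\cup C_{n-1})$, together with its equality case. My first attempt would be a translation argument. Pick $h\in G$ with $hC_n\cap C_n=\varnothing$ when $|G:H_n|\ge2$, and compare $A'\cap C_n$ with a translate of a subset of $A$ using left multiplication. If this fails, I would fall back on exploiting the exact partition $G=\bigsqcup t_jA$ more directly.
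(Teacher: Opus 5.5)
\textbf{Verdict: genuine gap.} Your opening is sound and matches the paper: $AK=A$, and Theorem~\ref{thm:A} together with $|A|=|G|/2^n$ forces an exact tiling $G=t_1A\sqcup\ldots\sqcup t_{2^n}A$. The fibration step, however, does not work.

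\textbf{The fibration step.} From $|A|=|G|/2^n$ and the fibre bound $|A\cap xH_n|\ge|H_n|/2^{n-1}$, the statement that $A$ meets at most $|G:H_n|/2$ cosets of $H_n$ already follows. So it is not the obstacle, and it does not force equality in the fibres. For that you need the opposite inequality: $A$ meets \emph{at least} $|G:H_n|/2$ cosets. This is false in general. Take $G=\ZZ/4\ZZ$, $C_1=\{2\}$ (so $H_1=0$) and $C_2=\{1,3\}$ (so $H_2=\{0,2\}$). Then $A=\{0\}=K$ is extremal. Fibring over $H_1$, however, $A$ meets one of four cosets, and its fibre has size $1>1/2$. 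So the analysis does not work ``with any index $i$''.

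A posteriori $A=aK\subseteq aH_i$ for every $i$, so fibre equality over $H_i$ holds exactly when $|G:H_i|=2$. For such an $i$, $A$ lies in the single coset $G\setminus C_n$, and your induction does go through. But you would first have to show that such an $i$ exists, and nothing in the proposal does this.

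\textbf{The other steps.} Your proposed substitute $|A|\ge\frac12|A'|$ is false: in the same example, ordered so that $C_n=\{1,3\}$, one has $A'=\{0,1,3\}$ and $|A|=1<3/2$. The gluing step (``$r=1$ by symmetry'') is not an argument. Your distinctness argument only really handles $C_i=C_j$, where Theorem~\ref{thm:A} with $n-1$ cosets gives $|A|\ge|G|/2^{n-1}$. If $C_i\ne C_j$ are disjoint cosets of the same subgroup, you do not have an $(n-1)$-coset configuration, and no induction hypothesis yields $|G:K|\le2^{n-1}$.

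\textbf{The missing idea.} The paper extracts rigidity from the exact tiling using the polynomials of Lemma~\ref{lem:helly}.
\begin{enumerate}
\item Set $p_y=\prod_i\bigl(X_i-\lambda_i(yH_i)\bigr)$ and $\alpha_t=\bigl(\lambda_i(tg_iH_i)\bigr)_i$. Then $p_y(\alpha_t)\ne0$ if and only if $y\in tA$.
\item Choose $y_t\in tA$ for each of the $2^n$ tiles. These $2^n$ polynomials are linearly independent, so they form a basis of $\mathcal M_n$.
\item For arbitrary $y\in t_0A$, disjointness gives $p_y(\alpha_s)=0$ for all $s\ne t_0$. Hence $p_y=c\,p_{y_{t_0}}$, and $c=1$ from the coefficient of $X_1\cdots X_n$.
\item Comparing the coefficients of $\prod_{i\ne j}X_i$ gives $yH_j=y_{t_0}H_j$ for all $j$. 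Thus $t_0A\subseteq y_{t_0}K$ and $|A|\le|K|$.
\item Translate so that $1\in A$. Then $AK=A$ gives $K\subseteq A$, and combined with $|A|\le|K|$ this yields $A=K$.
\end{enumerate}
For distinctness, the paper uses the strict part of Lemma~\ref{lem:helly}. If $\sim_i=\sim_j$, every $p_l$ is symmetric in $X_i,X_j$, so the $p_l$ cannot span $\mathcal M_n$. This gives $|T|<2^n$, contradicting the tiling.
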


Now we turn to the situation where $A=\varnothing$. 
Recall that a \emph{cover} $G=C_1\cup\ldots\cup C_n$ by cosets is called \emph{irredundant} if no $C_i$ can be omitted. A classical lemma of Neumann~\cite{Neumann1954} asserts that all subgroups occurring in an irredundant cover have finite index, and that $|G:H_i|\le n$ holds for at least one $i$. In~\cite{Neumann1954b} he bounded \emph{all} the indices by $|G:H_i|\le u_n$, where $u_1=1$ and $u_{k+1}=u_k^2+u_k$, a bound of order $c^{2^{n-1}}$ with $c\approx1.5979$. We improve this doubly exponential bound to the sharp exponential bound $2^{n-1}$.

\begin{Thm}\label{thm:B}
Let $G=C_1\cup\ldots\cup C_n$ be an irredundant cover of a group $G$ by left cosets $C_i=g_iH_i$. Then $|G:H_i|\le2^{n-1}$ for $i=1,\ldots,n$. This bound is best possible for every $n$. 
\end{Thm}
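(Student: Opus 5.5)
Fix $i$; by symmetry take $i=n$. The idea is to apply \autoref{thm:A} to the cover with $C_n$ removed, and then work inside a single coset of $H_n$.

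Since the cover is irredundant, $A:=G\setminus(C_1\cup\ldots\cup C_{n-1})$ is nonempty. Because $C_1,\ldots,C_n$ cover $G$, we have $A\subseteq C_n=g_nH_n$. By \autoref{thm:A}, applied to the $n-1$ cosets $C_1,\ldots,C_{n-1}$, there are $t_1,\ldots,t_m\in G$ with $m\le2^{n-1}$ and
\[
G=t_1A\cup\ldots\cup t_mA\subseteq t_1g_nH_n\cup\ldots\cup t_mg_nH_n .
\]
So $G$ is a union of at most $m$ left cosets of $H_n$. Distinct left cosets are disjoint, so $|G:H_n|\le m\le2^{n-1}$. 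The same argument works for every $i$, giving $|G:H_i|\le2^{n-1}$.

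For sharpness, take $G=\FF_2^{n-1}$ and the cosets $C_j:=\{x:x_j=1\}$ for $j=1,\ldots,n-1$, together with $C_n:=\{0\}$, the trivial subgroup. These cover $G$. The cover is irredundant: $C_n$ is needed to cover $0$. For $j<n$, the unit vector $e_j$ lies only in $C_j$, since $e_j\neq0$ and its only nonzero coordinate is the $j$-th. Finally $|G:\{0\}|=2^{n-1}$. For $n=1$ the statement is trivial, since then $G=C_1$ forces $H_1=G$.

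The substance of the argument is \autoref{thm:A}; the only real step here is noticing that the complement of the other $n-1$ cosets lies inside one coset of $H_i$. I expect no serious obstacle beyond checking irredundancy of the example.
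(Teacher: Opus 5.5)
Your proposal is correct and is essentially the paper's own proof. You apply \autoref{thm:A} to the $n-1$ cosets other than $C_i$, note that their complement lies in $C_i=g_iH_i$, and conclude that $G$ is a union of at most $2^{n-1}$ left cosets of $H_i$; your sharpness example $G=\FF_2^{n-1}$ with $C_n=\{0\}$ is exactly the paper's.
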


We remark that Tomkinson~\cite{Tomkinson1987} proved the sharp bound $|G:H_1\cap\ldots\cap H_n|\le n!$ in the situation of \autoref{thm:B}.

Throughout, $[n]:=\{1,\ldots,n\}$.

\section{A Helly lemma for unions of equivalence classes}\label{sec:helly}

Let $\mathcal M_n$ denote the space of multilinear polynomials in $\QQ[X_1,\ldots,X_n]$, that is, the $\QQ$-span of the $2^n$ monomials $\prod_{i\in S}X_i$ with $S\subseteq[n]$. These monomials are linearly independent, so
\begin{equation}\label{eq:dim}
	\dim_\QQ\mathcal M_n=2^n.
\end{equation}
The following lemma establishes a Helly property for certain unions of equivalence classes. 

\begin{Lem}\label{lem:helly}
Let $X$ be a set equipped with equivalence relations $\sim_1,\ldots,\sim_n$ and let $U_1,\ldots,U_m\subseteq X$ be sets of the form
\[
	U_j=Q_{j,1}\cup\ldots\cup Q_{j,n},
\]
where $Q_{j,i}$ is a $\sim_i$-class. If $U_1\cap\ldots\cap U_m=\varnothing$ while no proper subfamily has empty intersection, then $m\le2^n$. If moreover $\sim_i=\sim_j$ for some $i\ne j$, then $m<2^n$.
\end{Lem}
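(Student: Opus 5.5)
Since the paper has just introduced $\mathcal M_n$ with $\dim_\QQ\mathcal M_n=2^n$, the natural plan is a polynomial (linear algebra) argument of Bollobás/Frankl type.

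\emph{Witnesses.} By minimality, for each $k\in[m]$ the intersection $\bigcap_{j\ne k}U_j$ is nonempty. Choose a point $x_k$ in it. Then $x_k\notin U_k$, since otherwise it would lie in $U_1\cap\ldots\cap U_m=\varnothing$.

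\emph{Polynomials.} For each $j$ define $P_j\in\mathcal M_n$ and evaluate it at each point $x\in X$ by setting
\[
P_j(x):=\prod_{i=1}^n\bigl(1-[x\in Q_{j,i}]\bigr).
\]
Formally, let $P_j:=\prod_{i=1}^n(1-X_i)$. To make the evaluation depend on $j$, I will instead use the indicator vectors $e(x,j)_i:=[x\in Q_{j,i}]$. The set $U_j$ is determined by the classes $Q_{j,i}$, and these classes vary with $j$, so $x$ alone does not give a single point of $\{0,1\}^n$. I therefore switch to the dual formulation, attaching a polynomial $f_k$ to each witness $x_k$ and a point $v_j$ to each set $U_j$.

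\emph{Dual formulation.} Let $[x]_i$ denote the $\sim_i$-class of $x$. Then $x\in U_j$ iff $[x]_i=Q_{j,i}$ for some $i$. For each $k$ set
\[
F_k(Y):=\prod_{i=1}^n\bigl(1-\delta([x_k]_i,Y_i)\bigr),
\]
a function on tuples $Y=(Y_1,\ldots,Y_n)$ of classes. Then $F_k(Q_{j,\cdot})=1$ iff $x_k\notin U_j$, which happens iff $j=k$. So the $m\times m$ matrix $\bigl(F_k(Q_{j,\cdot})\bigr)_{j,k}$ is the identity, and the $F_k$ are linearly independent. Each $F_k$ is a product of $n$ factors, each depending on a single coordinate through $1$ and one indicator $\delta([x_k]_i,\cdot)$. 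Expanding, $F_k$ lies in the span of the $2^n$ products $\prod_{i\in S}\delta([x_k]_i,Y_i)$. These products still depend on $k$, which is the main obstacle: we need a common $2^n$-dimensional space. To get it I will restrict the coordinates to two values. Set $X_i:=\delta([x_k]_i,Q_{j,i})$. This is an entry depending on both $j$ and $k$, so I use triangularity rather than a fixed space.

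\emph{Triangular argument.} Order the witnesses so that a Frankl–Wilson/Lovász-type triangular criterion applies, and use multilinear polynomials $p_k\in\mathcal M_n$ evaluated at the vectors $v_{j}^{(k)}$. The cleaner route is a skew Bollobás theorem for two families: $A_k=\{(i,[x_k]_i)\}$ and $B_j=\{(i,Q_{j,i})\}$, each of size $n$, with $A_k\cap B_j=\varnothing$ iff $j=k$. In this uniform case Bollobás' theorem gives $m\le\binom{2n}{n}$, not $2^n$. The extra structure is that $A_k$ and $B_j$ are transversals of the same partition into $n$ blocks, one per $i$. That structure is exactly what the exterior-algebra/polynomial proof exploits to obtain $2^n$: assign to each class a generic vector in $\QQ^2$ and use $\bigwedge$ of $n$ copies of $\QQ^2$, which has dimension $2^n$. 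Concretely, I will map $x_k\mapsto\bigotimes_i a_{[x_k]_i}$ and $U_j\mapsto\bigotimes_i b_{Q_{j,i}}$ in $(\QQ^2)^{\otimes n}\cong\mathcal M_n$. Here I choose $a_c,b_c\in\QQ^2$ so that $\langle a_c,b_{c'}\rangle=0$ iff $c=c'$. A generic choice works: take $b_c=a_c^\perp$ with pairwise non-proportional $a_c$. The pairing then vanishes iff $x_k\in U_j$, that is iff $j\ne k$. So the pairing matrix is diagonal with nonzero diagonal, and $m\le2^n$.

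\emph{Strict inequality when $\sim_i=\sim_j$.} If $\sim_i=\sim_j$, use the same vectors for coordinates $i$ and $j$ and replace $a\otimes a$ by its symmetric square. Symmetrization preserves the pairing, and $\mathrm{Sym}^2\QQ^2$ has dimension $3<4$, giving $m\le 3\cdot2^{n-2}<2^n$.
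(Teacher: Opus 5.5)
Your final argument in $(\QQ^2)^{\otimes n}$ is correct once the abandoned attempts before it are set aside, and it is the paper's proof in tensor language. Take $a_c=(-\lambda_i(c),1)$ and $b_c\propto(1,\lambda_i(c))$. Under $(\QQ^2)^{\otimes n}\cong\mathcal M_n$ the pure tensor $\bigotimes_i a_{[x_k]_i}$ is then the paper's $p_k=\prod_i(X_i-\beta_{k,i})$, and pairing with $\bigotimes_i b_{Q_{j,i}}$ is evaluation at $\alpha_j$. Your $\mathrm{Sym}^2$ step is the paper's observation that each $p_l$ is symmetric in $X_i,X_j$, with the slightly sharper explicit bound $m\le 3\cdot2^{n-2}$. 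The one detail to add, as the paper does, is that vectors need only be assigned to the finitely many relevant classes, namely the $Q_{j,i}$ and the $[x_k]_i$, because $\QQ^2$ has only countably many directions.
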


\begin{proof}
By minimality there is, for every $j\in[m]$, an element
\[
	x_j\in\bigcap_{k\ne j}U_k,
\]
and $x_j\notin U_j$ because the total intersection is empty. Only finitely many equivalence classes are relevant, namely the $Q_{j,i}$ and the classes of the $x_j$. Hence for each $i$ we may choose an injection $\lambda_i$ from the set of $\sim_i$-classes occurring here into $\QQ$. Put
\[
	\alpha_{j,i}:=\lambda_i(Q_{j,i}),\qquad
	\beta_{j,i}:=\lambda_i([x_j]_{\sim_i}),\qquad
	\alpha_j:=(\alpha_{j,1},\ldots,\alpha_{j,n})\in\QQ^n.
\]
Since $x_j\notin U_j$, the element $x_j$ lies in none of the classes $Q_{j,1},\ldots,Q_{j,n}$, whence
\begin{equation}\label{eq:noneq}
	\beta_{j,i}\ne\alpha_{j,i}\qquad(1\le i\le n).
\end{equation}
On the other hand $x_j\in U_k$ for $k\ne j$, so $x_j$ lies in $Q_{k,i}$ for at least one $i$ and therefore
\begin{equation}\label{eq:eq}
	\beta_{j,i}=\alpha_{k,i}\quad\text{for some }i\in[n].
\end{equation}
Now define
\[
	p_j:=\prod_{i=1}^n(X_i-\beta_{j,i})\in\mathcal M_n\qquad(1\le j\le m).
\]
By~\eqref{eq:noneq} we have $p_j(\alpha_j)\ne0$, and by~\eqref{eq:eq} we have $p_j(\alpha_k)=0$ for $k\ne j$. Evaluating a vanishing linear combination $\sum_jc_jp_j=0$ at $\alpha_k$ isolates $c_kp_k(\alpha_k)$ and thus gives $c_k=0$. Hence $p_1,\ldots,p_m$ are linearly independent in $\mathcal M_n$ and $m\le2^n$ by~\eqref{eq:dim}.

Suppose finally that $\sim_i=\sim_j$ with $i\ne j$. Then both relations have the same classes, so we may choose $\lambda_i=\lambda_j$ and obtain $\beta_{l,i}=\beta_{l,j}$ for every $l$. Consequently each $p_l$ contains the factor $(X_i-\beta_{l,i})(X_j-\beta_{l,i})$ and is therefore symmetric in $X_i$ and $X_j$. 
Now the $p_l$ cannot span $\mathcal M_n$, so $m<2^n$. 
\end{proof}

\section{Union of translates}\label{sec:main}

Let $G$, $H_i$, $C_i=g_iH_i$ and $A$ be as in \autoref{sec:intro}. Set $U:=C_1\cup\ldots\cup C_n$. For each $i$ let $\sim_i$ be the equivalence relation on $G$ whose classes are the left $H_i$-cosets. Every left translate of $U$ then has the shape required in \autoref{lem:helly}, namely
\begin{equation}\label{eq:tU}
	tU=\bigcup_{i=1}^ntg_iH_i\qquad(t\in G).
\end{equation}
Readers interested only in finite groups may jump directly to the proof of \autoref{thm:A} below. For infinite $G$ an additional ingredient is required.

For $\varnothing\ne S\subseteq[n]$ we write $H_S:=\bigcap_{i\in S}H_i$. 
Set $\mathcal L:=\{H_S:\varnothing\ne S\subseteq[n]\}$, a family of subgroups of $G$ closed under intersection.

\begin{Lem}\label{lem:elem}
Let $D=dL$ and $D'=d'L'$ be left cosets with $L,L'\in\mathcal L$. Either $D\cap D'=\varnothing$ or $D\cap D'$ is a left coset of $L\cap L'\in\mathcal L$. In the latter case $D\cap D'=D$ if and only if $L\subseteq L'$.
\end{Lem}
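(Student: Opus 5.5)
The statement is elementary, and I would prove it directly from the definitions in three short steps.

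First, closure of $\mathcal L$ under intersection. If $L=H_S$ and $L'=H_{S'}$ with nonempty $S,S'\subseteq[n]$, then $L\cap L'=H_{S\cup S'}$. Since $S\cup S'$ is again nonempty, $L\cap L'\in\mathcal L$.

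Second, the structure of $D\cap D'$. Suppose $D\cap D'\ne\varnothing$ and pick $x\in D\cap D'$. Then $dL=xL$ and $d'L'=xL'$, because a left coset equals the coset of any of its elements. Hence $D\cap D'=xL\cap xL'$. Left multiplication by $x$ is a bijection of $G$, so it commutes with intersections, giving $xL\cap xL'=x(L\cap L')$. This is a left coset of $L\cap L'$.

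Third, the criterion $D\cap D'=D$. In the nonempty case we have $D=xL$ and $D\cap D'=x(L\cap L')$.
\begin{itemize}
\item If $L\subseteq L'$, then $L\cap L'=L$, so $D\cap D'=xL=D$.
\item Conversely, if $x(L\cap L')=xL$, cancel $x$ on the left to get $L\cap L'=L$, that is, $L\subseteq L'$.
\end{itemize}

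The only point needing care is the choice of a common representative $x$ of both cosets, which is exactly where nonemptiness is used; there is no real obstacle. The condition $S\ne\varnothing$ in the definition of $\mathcal L$ is preserved under unions, so membership in $\mathcal L$ is automatic.
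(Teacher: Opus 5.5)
Your proof is correct and follows essentially the same route as the paper: choose a common element $y\in D\cap D'$, rewrite $D=yL$ and $D'=yL'$ to get $D\cap D'=y(L\cap L')$, and compare this with $yL$. Your explicit check that $H_S\cap H_{S'}=H_{S\cup S'}\in\mathcal L$ is a small extra; the paper instead records this closure property when it defines $\mathcal L$, just before the lemma.
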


\begin{proof}
If $y\in D\cap D'$, then $D=yL$ and $D'=yL'$, hence $D\cap D'=y(L\cap L')$. This equals $D=yL$ if and only if $L\cap L'=L$.
\end{proof}

\begin{Lem}\label{lem:finite}
Suppose that $1\notin U$. Then there is a finite subset $T\subseteq G$ with $\bigcap_{t\in T}tU=\varnothing$.
\end{Lem}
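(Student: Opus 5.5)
The plan is to construct $T$ step by step, starting from $T=\{1\}$. At each stage I keep a \emph{formal} representation of $V_T:=\bigcap_{t\in T}tU$ as a finite list of left cosets of subgroups in $\mathcal L$. At each step I add one element to $T$ chosen so that a suitable well-founded measure of this list strictly decreases. The process must then stop, and it can only stop when the list, and hence $V_T$, is empty.

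First I note that this reduces to one fact: if $x\in V_T$, then adding $x$ to $T$ removes $x$. Indeed $x\notin xU$ because $1\notin U$, and $x\notin xU$ means $x\notin xg_iH_i$ for every $i$. Initially $V_{\{1\}}=U=\bigcup_i g_iH_i$ is a list of $n$ cosets with subgroups $H_i=H_{\{i\}}\in\mathcal L$. Now suppose $V_T=D_1\cup\ldots\cup D_r$ with $D_k$ a left coset of $L_k\in\mathcal L$, and choose any $x\in V_T$. Then
\[
V_{T\cup\{x\}}=V_T\cap xU=\bigcup_{k}\bigcup_{i=1}^n \bigl(D_k\cap xg_iH_i\bigr).
\]
I build the new list as follows. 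A coset $D_k$ with $x\notin D_k$ is kept unchanged; this is legitimate because $D_k\cap xU\subseteq D_k$, and keeping the larger set $D_k$ would overestimate $V_{T\cup\{x\}}$. To avoid this, I must instead use the exact union. So the correct list is all nonempty pieces $D_k\cap xg_iH_i$. By \autoref{lem:elem} each such piece is a coset of $L_k\cap H_i\in\mathcal L$. If $x\in D_k$, every piece is a proper subset of $D_k$, since it misses $x$. Hence by \autoref{lem:elem} its subgroup is strictly smaller than $L_k$. If $x\notin D_k$, the pieces are cosets of subgroups contained in $L_k$: either one of them equals $D_k$, or all are strictly smaller. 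To keep the count from growing in the first case, I replace the pieces of such a $D_k$ by $D_k$ itself whenever some piece equals $D_k$. This is harmless, since then $D_k\subseteq xU$.

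The measure is defined as follows. Fix a linear order $L^{(1)},\ldots,L^{(s)}$ of the finite set $\mathcal L$ that refines reverse inclusion, so larger subgroups come first. To a list I attach the vector $(c_1,\ldots,c_s)\in\NN^s$, where $c_j$ is the number of list entries that are cosets of $L^{(j)}$, and I compare vectors lexicographically. This order on $\NN^s$ is a well-order. In one step, every $D_k$ containing $x$ is replaced by finitely many cosets of strictly smaller subgroups, that is, of strictly later indices. Every other $D_k$ is either kept, or replaced by cosets of strictly later indices. At least one $D_k$ contains $x$. So the first coordinate that changes is the index of the largest subgroup among the $D_k$ that are actually replaced, and that coordinate drops. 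Hence the vector strictly decreases.

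Since the lexicographic order on $\NN^s$ admits no infinite descending chain, the process terminates. It stops only when $V_T=\varnothing$, because otherwise we can pick some $x\in V_T$ and continue. The resulting $T$ is finite, which proves the lemma. I expect the main obstacle to be the termination argument, specifically making the bookkeeping precise. Two points need care: the list must stay an exact representation of $V_T$, and replacing a coset must never raise a more significant coordinate. The refinement of reverse inclusion, together with the strict-subgroup clause of \autoref{lem:elem}, is exactly what handles both.
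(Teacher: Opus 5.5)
Your proof is correct and follows the paper's argument. The paper likewise enlarges $T$ one point $x\in\bigcap_{t\in T}tU$ at a time, keeps an exact list of cosets of members of $\mathcal L$, and refines it via \autoref{lem:elem}. The only real difference is the termination certificate: you use the lexicographic order on count vectors over a linear extension of reverse inclusion, where the paper uses the weight $\sum_\rho(n+1)^{h(L_\rho)}$. Your version does not even need the bound of at most $n$ pieces per coset.

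Delete the draft sentence claiming that cosets with $x\notin D_k$ may be ``kept unchanged''. Your final rule is the right one: keep $D_k$ only if $D_k\subseteq xg_iH_i$ for some $i$, and otherwise split it into its proper pieces. This rule works even though it is slightly stricter than the paper's criterion $D_\rho\subseteq xU$.
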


\begin{proof}
For $L\in\mathcal L$ let the \emph{height} $h(L)$ be the largest $k\ge0$ such that there is a chain $L=L_0\supsetneq L_1\supsetneq\ldots\supsetneq L_k$ in $\mathcal L$. To a finite list $\mathcal D=(D_1,\ldots,D_r)$ of left cosets $D_\rho=d_\rho L_\rho$ with $L_\rho\in\mathcal L$ we attach the weight
\[
	w(\mathcal D):=\sum_{\rho=1}^r(n+1)^{h(L_\rho)}\in\NN.
\]
Note that $w(\mathcal D)=0$ if and only if $\mathcal D$ is the empty list.

We construct finite sets $T$ together with lists $\mathcal D$ whose union is $W:=\bigcap_{t\in T}tU$, starting with $T=\{1\}$ and $\mathcal D=(g_1H_1,\ldots,g_nH_n)$. Assume $T$ and $\mathcal D$ have been constructed and $W\ne\varnothing$, so that $\mathcal D$ is nonempty. Choose $x\in D_1$ and put $T':=T\cup\{x\}$ and $W':=W\cap xU$. Since $1\notin U$ we have $x\notin xU$. We build a list $\mathcal D'$ for $W'$ as follows. Every $D_\rho$ with $D_\rho\subseteq xU$ is kept. Every other $D_\rho$ is replaced by the nonempty sets among $D_\rho\cap xg_iH_i$ with $i\in[n]$, whose union is $D_\rho\cap xU$. Each of these is a proper subset of $D_\rho$, for otherwise $D_\rho\subseteq xg_iH_i\subseteq xU$. Hence by \autoref{lem:elem} it is a left coset of $L_\rho\cap H_i\in\mathcal L$ with $L_\rho\cap H_i\subsetneq L_\rho$, so its height is at most $h(L_\rho)-1$. Consequently the contribution of $\rho$ to the weight drops from $(n+1)^{h(L_\rho)}$ to at most $n(n+1)^{h(L_\rho)-1}<(n+1)^{h(L_\rho)}$. The coset $D_1$ is of the second kind, because $x\in D_1$ but $x\notin xU$. Therefore $w(\mathcal D')<w(\mathcal D)$.

A strictly decreasing sequence of nonnegative integers is finite, so after finitely many steps the construction must halt, which happens only when $W=\varnothing$.
\end{proof}

\begin{proof}[Proof of \autoref{thm:A}]
Fix $a\in A$ and replace $A$, $U$ and every $C_i$ by their left translates by $a^{-1}$. This changes neither the subgroups $H_i$ nor the assertion, so we may assume $1\in A$, that is, $1\notin U$. By \autoref{lem:finite} there is a finite $T\subseteq G$ with $\bigcap_{t\in T}tU=\varnothing$ (for finite $G$, this is obvious since $t\notin tU$). Choose $T$ inclusion-minimal with this property and write $T=\{t_1,\ldots,t_m\}$. By~\eqref{eq:tU} the sets $t_1U,\ldots,t_mU$ satisfy the hypotheses of \autoref{lem:helly} with $X=G$, so $m\le2^n$. Taking complements and using $G\setminus t_jU=t_jA$ we obtain
\[
	G=t_1A\cup\ldots\cup t_mA.\qedhere
\]
\end{proof}

\section{The equality case}\label{sec:equality}

We now analyze when~\eqref{eq:conj} is an equality and prove \autoref{thm:C}. 

The proof of \autoref{thm:C} uses the polynomials of \autoref{lem:helly} in the concrete situation at hand, so we make them explicit. Let $G$ be finite and choose for every $i\in[n]$ an injection $\lambda_i$ from the set of left $H_i$-cosets into $\QQ$. For $y\in G$ and $t\in G$ put
\begin{equation}\label{eq:pyalpha}
	p_y:=\prod_{i=1}^n\bigl(X_i-\lambda_i(yH_i)\bigr)\in\mathcal M_n,
	\qquad
	\alpha_t:=\bigl(\lambda_1(tg_1H_1),\ldots,\lambda_n(tg_nH_n)\bigr)\in\QQ^n.
\end{equation}
Since the $\lambda_i$ are injective, $p_y(\alpha_t)=0$ holds if and only if $yH_i=tg_iH_i$ for some $i$, that is, if and only if $y\in tU$. Hence
\begin{equation}\label{eq:vanish}
	p_y(\alpha_t)=0\iff y\in tU,
	\qquad
	p_y(\alpha_t)\ne0\iff y\in tA.
\end{equation}

\begin{proof}[Proof of \autoref{thm:C}]
If $A=aK$ and $|G:K|=2^n$, then $|A|=|K|=|G|/2^n$. Conversely, assume $|A|=|G|/2^n$. Fix $a\in A$ and replace all $C_i$ by $a^{-1}C_i$. This changes neither $|A|$ nor the subgroups $H_i$, and it replaces $A$ by $a^{-1}A\ni1$. It therefore suffices to prove $A=K$ under the additional assumption $1\in A$. Since $K\le H_i$, we have $C_iK=g_iH_iK=g_iH_i=C_i$ for every $i$. This gives $UK=U$ and $AK=A$. As $1\in A$, we conclude $K\subseteq A$.

Choose $T\subseteq G$ inclusion-minimal with $\bigcap_{t\in T}tU=\varnothing$, which is possible by \autoref{lem:finite}. Equivalently, $G=\bigcup_{t\in T}tA$ and no proper subfamily covers $G$. By~\eqref{eq:tU} and \autoref{lem:helly} we have $|T|\le2^n$, whereas
\[
	|G|\le\sum_{t\in T}|tA|=|T|\,|A|=|T|\,\frac{|G|}{2^n}
\]
forces $|T|\ge2^n$. Hence $|T|=2^n$ and the above estimate is an equality, which means that
\begin{equation}\label{eq:tiling}
	G=\bigcup_{t\in T}tA\quad\text{is a disjoint union.}
\end{equation}

For every $t\in T$ pick $y_t\in tA$. By~\eqref{eq:tiling} we have $y_t\notin sA$ for $s\in T\setminus\{t\}$, so~\eqref{eq:vanish} yields
\[
	p_{y_t}(\alpha_t)\ne0,
	\qquad
	p_{y_t}(\alpha_s)=0\quad(s\in T\setminus\{t\}).
\]
As in the proof of \autoref{lem:helly} the polynomials $p_{y_t}$ with $t\in T$ are linearly independent, and since there are $2^n=\dim\mathcal M_n$ of them, they form a basis of $\mathcal M_n$.

Let $t_0\in T$ be fixed, and let $y\in t_0A$ be arbitrary. Again by~\eqref{eq:tiling} and~\eqref{eq:vanish} we have $p_y(\alpha_s)=0$ for $s\in T\setminus\{t_0\}$. Writing $p_y=\sum_{t\in T}c_tp_{y_t}$ and evaluating at $\alpha_s$ gives $c_sp_{y_s}(\alpha_s)=p_y(\alpha_s)=0$, hence $c_s=0$ for every $s\ne t_0$. Thus $p_y=c\,p_{y_{t_0}}$ for some $c\in\QQ$. Comparing in~\eqref{eq:pyalpha} the coefficients of $X_1\cdots X_n$ gives $c=1$, and comparing the coefficients of $\prod_{i\ne j}X_i$ gives
\[
	\lambda_j(yH_j)=\lambda_j(y_{t_0}H_j)\qquad(1\le j\le n).
\]
Since $\lambda_j$ is injective, this means $yH_j=y_{t_0}H_j$ for all $j$, that is, $y\in y_{t_0}K$. As $y\in t_0A$ was arbitrary, $t_0A\subseteq y_{t_0}K$ and therefore
\[
	|A|=|t_0A|\le|K|.
\]
Combined with $K\subseteq A$ this gives $A=K$, and $|G:K|=|G|/|A|=2^n$. 

If $H_i=H_j$ for some $i\ne j$, then $\sim_i=\sim_j$ and \autoref{lem:helly} gives $|T|<2^n$, contradicting $|T|=2^n$. Hence the $H_i$ are pairwise distinct.
\end{proof}

\section{Irredundant covers}\label{sec:cons}

\begin{proof}[Proof of \autoref{thm:B}]
Fix $i\in[n]$ and put $A_i:=G\setminus\bigcup_{j\ne i}C_j$. Irredundancy gives $A_i\ne\varnothing$ and the covering property gives $A_i\subseteq C_i$. Applying \autoref{thm:A} to the $n-1$ cosets $C_j$ with $j\ne i$ we find $t_1,\ldots,t_m\in G$ with $m\le2^{n-1}$ and
\[
	G=\bigcup_{k=1}^mt_kA_i\subseteq\bigcup_{k=1}^mt_kg_iH_i.
\]
Thus $G$ is a union of at most $2^{n-1}$ left cosets of $H_i$ and $|G:H_i|\le2^{n-1}$.

For the sharpness let $G:=\FF_2^{n-1}$, let $C_i:=\{x\in G:x_i=1\}$ for $i<n$ and let $C_n:=\{0\}$. Every nonzero $x\in G$ has a coordinate equal to $1$, so these $n$ cosets cover $G$. The $i$-th standard basis vector lies in $C_i$ only, and $0$ lies in $C_n$ only, so the cover is irredundant. Here $H_n=1$ and $|G:H_n|=2^{n-1}$.
\end{proof}

\section*{Acknowledgment}
The large language models GPT-5.6-sol and Claude Opus~5 assisted in the preparation of this paper. I have checked all details and take full responsibility for the contents of this paper.

\end{document}